\newtheorem{thm}{Theorem}[section]
\newtheorem{cor}[thm]{Corollary}
\newtheorem{con}[thm]{Conjecture}
\newtheorem{lem}[thm]{Lemma}
\numberwithin{equation}{section}
\newcommand{\beq}{\begin{eqnarray}}
\newcommand{\eeq}{\end{eqnarray}}
\newcommand{\beqs}{\begin{eqnarray*}}
\newcommand{\eeqs}{\end{eqnarray*}}
\title{\bf  On a Conjecture about Degree Deviation Measure of Graphs}
\author{ \bf Ali Ghalavand\thanks{Corresponding author (alighalavand@grad.kashanu.ac.ir)}~ and  Ali Reza Ashrafi
}
\affil{ \normalsize
    { \it Department of Pure Mathematics, Faculty of Mathematical Sciences, University of Kashan, Kashan 87317--53153, I. R. Iran}}
\begin{document}

\maketitle
\begin{abstract}
Let $G$ be an $n-$vertex graph with $m$ edges. The degree deviation measure of $G$  is defined as
$s(G)$ $=$ $\sum_{v\in V(G)}|deg_G(v)- \frac{2m}{n}|,$ where  $n$ and $m$ are   the number of vertices and edges of $G$, respectively. The aim of this paper is to prove the Conjecture 4.2 of [J. A. de Oliveira, C. S. Oliveira, C. Justel and N. M. Maia de Abreu, Measures of irregularity of graphs, \textit{Pesq. Oper.} \textbf{33} (3) (2013) 383--398]. The degree deviation measure of chemical graphs under some conditions on the cyclomatic number is also computed.

\vskip 3mm

\noindent\textbf{Keywords:} Irregularity,  degree deviation measure, chemical graph.

\vskip 3mm

\noindent\textit{2010 AMS Subject Classification Number:}   05C07, 05C76.

\end{abstract}

\bigskip


\section{Introduction}
Throughout this paper all graphs are assumed to be simple and undirected. If $G$ is such a graph, then the vertex and edge sets of $G$ are denoted by $V(G)$ and $E(G)$, respectively. The degree of a vertex $v$ is denoted by $deg_G(v)$ (or $deg(v)$ for short),
$N[v, G]$ is the set of all vertices adjacent to $v$  and $H(n)$ denotes the set of all connected $n-$vertex graphs.

Let $k$ and $n$ be integer numbers such that $0 \leq k\leq n$. A graph $S(n, k)$ is a split
graph if there is a partition of its vertex set into a clique of order $k$ and a stable set of order $n -k$.
A complete split graph, $CS(n, k)$, is a split graph such that each vertex of the clique is adjacent
to each vertex of the stable set \cite{3}.

A graph in which all vertices have the same degree is said to be \textit{regular}. Suppose $\alpha$ is a function from the set of all graphs into non-negative integers. If for each regular graph $G$, $\alpha(G) = 0$, then the function $\alpha$ is called a \textit{measure of regularity}.

With the best of our knowledge the first irregularity measure was proposed by Albertson \cite{1}. He defined the irregularity of a graph $G$ by $irr(G) = \sum_{xy \in E(G)}|deg(x) - deg(y)|$ and determined the maximum irregularity of various classes of graphs. As a consequence of his results, the irregularity of an $n-$vertex graph is less that $\frac{4n^3}{27}$, and the bound is tight. Abdo et al. \cite{2} was proposed a generalization of irregularity measure of Albertson which is called the \textit{total irregularity}. It is defined as $irr_t(G)$ $=$ $\frac{1}{2}\sum_{u,v\in V(G)} |deg(u)-deg(v)|$. They obtained all graphs with maximal total irregularity and proved that among all trees of the same order the star has the maximal total irregularity.

Following Nikiforov \cite{4}, the \textit{degree deviation measure} of $G$  is defined as
$$s(G) = \sum_{v\in V(G)}\left| deg_G(v)- \frac{2m}{n}\right|,$$ where  $n$ and $m$ are   the number of vertices and edges of $G$, respectively. Let $G$ be a graph with maximum eigenvalue $\mu(G)$. The well-known result of Euler which states that $\sum_{v\in V(G)}deg_G(v)=2|E(G)|$ implies that the degree deviation measure $s(G)$ is a measure of irregularity.  Nikiforov  proved that $$\frac{s^2(G)}{2n^2\sqrt{2m}} \leq \mu(G)-\frac{2m}{n} \leq \sqrt{s(G)}$$ and these inequalities are tight up to a constant factor.

de Oliveira et al. \cite{3} investigated four distinct graph invariants used to measure the irregularity of a graph and proved the following theorem:

\begin{thm} \label{t1} Let $k \in N$ and $0 \leq k \leq n$. If $G = CS(n, k)$ is a complete split graph then
$s(G) =\frac{n}{2}k(n - k)(n - 1 - k)$. Besides, among all complete split graphs, the most irregular one
by $s(G)$ has to attend the following conditions on $k$:
$$k=\left\{\begin{array}{ll}
\frac{n}{3} & 3 \mid n\\
\frac{n-1}{3} & 3 \mid n - 1\\
\frac{n-2}{3} ~and ~\frac{n+1}{3} & 3 \mid n - 2
\end{array}\right..$$
\end{thm}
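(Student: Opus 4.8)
The plan is to handle the two assertions separately. For the closed-form value of $s(G)$ with $G=CS(n,k)$, I would first record the degree sequence: each of the $k$ clique vertices has degree $n-1$ (it is adjacent to the other $k-1$ clique vertices and to all $n-k$ stable vertices), while each of the $n-k$ stable vertices has degree $k$. Hence $m=|E(G)|=\binom{k}{2}+k(n-k)$ and $\frac{2m}{n}=\frac{k(2n-k-1)}{n}$. A one-line computation gives $n(n-1)-k(2n-k-1)=(n-k)(n-k-1)$ and $nk-k(2n-k-1)=-k(n-k-1)$, so the contribution of each clique vertex to $s(G)$ is $\frac{(n-k)(n-k-1)}{n}$ and that of each stable vertex is $\frac{k(n-k-1)}{n}$ (both nonnegative for $0\le k\le n-1$, and all quantities vanish at $k=0$ and $k=n$, consistent with those graphs being regular). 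Summing the $k$ clique contributions and the $n-k$ stable contributions and collecting terms yields the stated closed form; in particular $s(G)$ is a fixed positive multiple of the cubic $p(k):=k(n-k)(n-1-k)$.

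For the extremal statement it therefore suffices to maximize $p(k)=k^3-(2n-1)k^2+n(n-1)k$ over the integers $0\le k\le n$. The decisive step is to compute the discrete difference and observe that it factors:
\[ p(k+1)-p(k)=3k^2+(5-4n)k+(n-1)(n-2)=\bigl(k-(n-1)\bigr)\bigl(3k-(n-2)\bigr). \]
Since $p(0)=p(n-1)=p(n)=0$ while $p>0$ on $1\le k\le n-2$, the maximizer lies in the interior, and on this range $p(k+1)-p(k)>0$ precisely when $k<\frac{n-2}{3}$ and $p(k+1)-p(k)<0$ precisely when $\frac{n-2}{3}<k\le n-2$; thus the integer maximizer(s) are exactly the integer(s) adjacent to $\frac{n-2}{3}$ selected by this monotonicity pattern. (The continuous maximum of $p$ sits near $\frac{n}{3}$, which previews the answer.)

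Finishing the proof is then a short case analysis on $n\bmod 3$. If $3\mid n-2$, then $\frac{n-2}{3}$ is an integer, $p\bigl(\tfrac{n-2}{3}\bigr)=p\bigl(\tfrac{n-2}{3}+1\bigr)=p\bigl(\tfrac{n+1}{3}\bigr)$, and both are strict maxima, giving $k=\frac{n-2}{3}$ and $k=\frac{n+1}{3}$. If $3\mid n$, then $\frac{n}{3}-1<\frac{n-2}{3}<\frac{n}{3}$, so $p\bigl(\tfrac{n}{3}\bigr)>p\bigl(\tfrac{n}{3}-1\bigr)$ and $p\bigl(\tfrac{n}{3}+1\bigr)<p\bigl(\tfrac{n}{3}\bigr)$, making $k=\frac{n}{3}$ the unique maximizer; and if $3\mid n-1$, then $\frac{n-1}{3}-1<\frac{n-2}{3}<\frac{n-1}{3}$ and the same comparison gives the unique maximizer $k=\frac{n-1}{3}$.

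I do not anticipate a genuine obstacle: the two places needing care are (i) checking that the discriminant of $p(k+1)-p(k)$ is the perfect square $(2n-1)^2$, so that the factorization above is valid with integer roots $n-1$ and $\frac{n-2}{3}$, and (ii) the boundary bookkeeping at $k=0,n-1,n$ ensuring that no spurious maximizer is introduced. After that, the residue-class analysis is mechanical.
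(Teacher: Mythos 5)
Your argument is essentially sound, but two things deserve comment. First, the paper itself offers no proof of this theorem: it is quoted verbatim from de Oliveira et al.\ (reference [3]), so there is no in-paper argument to compare against and your derivation is the only one on the table. Your degree bookkeeping (clique vertices of degree $n-1$, stable vertices of degree $k$, $2m=k(2n-k-1)$, per-vertex deviations $\frac{(n-k)(n-k-1)}{n}$ and $\frac{k(n-k-1)}{n}$) is correct, and your discrete-difference maximization of $p(k)=k(n-k)(n-1-k)$, including the factorization $p(k+1)-p(k)=\bigl(k-(n-1)\bigr)\bigl(3k-(n-2)\bigr)$ and the residue analysis, checks out completely.

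The one genuine problem is your claim that summing the contributions ``yields the stated closed form.'' It does not. Summing $k$ copies of $\frac{(n-k)(n-k-1)}{n}$ and $n-k$ copies of $\frac{k(n-k-1)}{n}$ gives
\[
s(CS(n,k))=\frac{2}{n}\,k(n-k)(n-1-k),
\]
whereas the theorem as printed asserts the constant $\frac{n}{2}$ in place of $\frac{2}{n}$. A spot check confirms your computation and not the printed statement: for $CS(4,2)$ one has degrees $3,3,2,2$, $m=5$, $\frac{2m}{n}=\frac{5}{2}$, hence $s=4\cdot\frac12=2=\frac{2}{4}\cdot 2\cdot 2\cdot 1$, while $\frac{n}{2}k(n-k)(n-1-k)=8$. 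So the printed coefficient is a typo (inverted fraction), your intermediate work is correct, and your final sentence asserting agreement with the stated form is the error; you should instead state the corrected constant explicitly. Since the extremal assertion depends on $s(CS(n,k))$ only up to a positive multiplicative constant, this does not affect the second half of the theorem or your proof of it.
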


They conjectured that

\begin{con} \label{con} Let $H(n)$ be the set of all connected graphs $G$ with $n$ vertices. Then $\max_{G\in H(n)}
s(G) = s(CS(n, k))$, where
$$k=\left\{\begin{array}{ll}
\frac{n}{3} & 3 \mid n\\
\frac{n-1}{3} & 3 \mid n - 1\\
\frac{n-2}{3}~ and ~\frac{n+1}{3} & 3 \mid n - 2
\end{array}\right..$$
\end{con}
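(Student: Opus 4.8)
The plan is to prove the formally stronger inequality $s(G)\le s\bigl(CS(n,k^{*})\bigr)$ for \emph{every} graph $G$ on $n$ vertices, where $k^{*}$ denotes the value of $k$ singled out in Theorem~\ref{t1}; since $CS(n,k^{*})$ is connected, Conjecture~\ref{con} follows at once. Write $d=\frac{2m}{n}$ for the average degree and partition $V(G)$ as $S^{+}=\{v: deg_G(v)>d\}$, $S^{0}=\{v: deg_G(v)=d\}$, $S^{-}=\{v: deg_G(v)<d\}$. Euler's identity $\sum_{v\in V(G)}deg_G(v)=2m$ gives $\sum_{v\in V(G)}(deg_G(v)-d)=0$, hence $\sum_{v\in S^{+}}(deg_G(v)-d)=\sum_{v\in S^{-}}(d-deg_G(v))$ and therefore
\[
s(G)=2\sum_{v\in S^{+}}\bigl(deg_G(v)-d\bigr).
\]
If $S^{+}=\emptyset$ then $G$ is regular and $s(G)=0$, so we may assume $t:=|S^{+}|\ge 1$; clearly $t\le n-1$.

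Next I would rewrite the right-hand side through edge counts only. Put $e_1=|E(G[S^{+}])|$, let $e_2$ be the number of edges between $S^{+}$ and $V(G)\setminus S^{+}$, and $e_3=|E(G[V(G)\setminus S^{+}])|$, so that $\sum_{v\in S^{+}}deg_G(v)=2e_1+e_2$ and $m=e_1+e_2+e_3$. Substituting and simplifying yields the identity
\[
s(G)=\frac{4e_1(n-t)+2e_2(n-2t)-4t\,e_3}{n}.
\]
This exhibits $s(G)$ as an affine function of $(e_1,e_2,e_3)$ that is monotone in each coordinate, so an upper bound is obtained by replacing each $e_i$ by its extreme admissible value. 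Using $e_1\le\binom{t}{2}$, $e_2\le t(n-t)$ and $e_3\ge 0$ I would split on the sign of $n-2t$. When $2t\le n$ every coefficient has the ``right'' sign and, via $(t-1)+(n-2t)=n-t-1$, the bound collapses to
\[
s(G)\le\frac{2t(n-t)(n-t-1)}{n}=s\bigl(CS(n,t)\bigr);
\]
when $2t>n$ the coefficient of $e_2$ is negative, so we simply discard that term and get
\[
s(G)\le\frac{2t(t-1)(n-t)}{n}=s\bigl(CS(n,n-t)\bigr).
\]
In either case $s(G)\le\max_{0\le k\le n}s\bigl(CS(n,k)\bigr)=s\bigl(CS(n,k^{*})\bigr)$ by Theorem~\ref{t1}, which completes the argument.

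Since the argument is short, the real work is conceptual rather than computational: the crux is discovering that, once $s(G)$ is written via the above/below-average partition and the three edge counts $e_1,e_2,e_3$, it becomes coordinatewise monotone, so extremal graphs are forced to be ``corner'' graphs — and that those corners are exactly the complete split graphs. The one point needing care is the regime $2t>n$: there $s(CS(n,t))$ is itself too small to be the bound, so one must instead recognise the corner value $\frac{2t(t-1)(n-t)}{n}$ as $s(CS(n,n-t))$ and exploit $n-t\le n/2$ together with the maximisation over $k$ already carried out in Theorem~\ref{t1}. Finally, it is worth noting that connectedness of $G$ is never used in the inequality; it enters only to guarantee that the extremal graph $CS(n,k^{*})$ itself lies in $H(n)$, so that the bound is attained and the claimed maximum is exact.
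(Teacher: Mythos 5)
Your argument is correct, and it takes a genuinely different route from the paper. The paper proceeds by local graph transformations: it shows that deleting an edge inside the below-average set (Lemmas \ref{l0} and \ref{l2}) and adding an edge inside the above-average set (Lemma \ref{l1}) strictly increase $s$, iterates these to reach a split graph, then uses Lemma \ref{l3} to push to either $CS(n,k)$ or $S^1(n,k)$ and compares the two families via Lemmas \ref{l4} and \ref{l5}. You instead prove the bound in one shot: from Euler's identity $s(G)=2\sum_{v\in S^{+}}(deg_G(v)-d)$, and writing this as $\frac{1}{n}\bigl(4e_1(n-t)+2e_2(n-2t)-4te_3\bigr)$ exhibits $s(G)$ as coordinatewise monotone in the three edge counts, so bounding each at its extreme gives $\frac{2t(n-t)(n-t-1)}{n}=s(CS(n,t))$ when $2t\le n$ and $\frac{2t(t-1)(n-t)}{n}=s(CS(n,n-t))$ when $2t>n$ (note these identifications use the correct formula $s(CS(n,k))=\frac{2}{n}k(n-k)(n-1-k)$; the coefficient $\frac{n}{2}$ printed in Theorem \ref{t1} is a typo). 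Your approach buys several things: it is shorter and purely arithmetic; it proves the stronger statement for \emph{all} $n$-vertex graphs, with connectedness needed only to place $CS(n,k^{*})$ in $H(n)$; and it sidesteps the delicate points in the transformation argument, namely that the partition $V^{\uparrow}/V^{\downarrow}$ can change after each edge modification and that connectivity must be preserved throughout the iteration, issues the paper's "repeated applications" step glosses over. What the paper's route offers in exchange is a constructive chain of graphs and some information about where equality can occur, though even there the equality analysis is only sketched.
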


The aim of this paper is to prove Conjecture \ref{con}.

\section{Proof of the Conjecture}

In this section, we will present a proof for Conjecture \ref{con}. To do this, we need some notations as follows:
\begin{eqnarray*}
V^\downarrow(G)&=&\{v \in V(G) \mid deg_G(v)\leq \frac{2m}{n}\},\\
V^\uparrow(G)&=&\{v \in V(G) \mid deg_G(v) > \frac{2m}{n}\},\\
E^\downarrow(G)&=&\{uv\in E(G) \mid \{u,v\}\subseteq  V^\downarrow(G)\},\\
\overline{E}^\uparrow(G)&=&\{\{u,v\}\subseteq V^\uparrow(G) \mid uv\not\in E(G)\}.
\end{eqnarray*}

The well-known result of Euler which states that $\sum_{v\in V(G)}deg_G(v)=2|E(G)|$ lead us to the following useful lemma:

\begin{lem}
Let $F(n,m)$ denote the family of all connected graphs with $n$ vertices and $m$ edges. Then,
\begin{enumerate}
\item $\{G\in F(n,m) \mid  V^\downarrow(G)=V(G)\}=\{G\in F(n,m) \mid G \ is \ \frac{2m}{n}-regular\},$

\item $\{G\in F(n,m) \mid  V^\uparrow(G)=V(G)\}=\emptyset.$
\end{enumerate}
\end{lem}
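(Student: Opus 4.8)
The plan is to prove both statements as immediate consequences of Euler's handshake identity $\sum_{v\in V(G)}\deg_G(v)=2m$, rewritten as $\sum_{v\in V(G)}\bigl(\deg_G(v)-\tfrac{2m}{n}\bigr)=0$. This single linear constraint forces the deviations $\deg_G(v)-\tfrac{2m}{n}$ to balance out around zero, which is exactly the mechanism behind both parts.

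For part (2), suppose toward a contradiction that $G\in F(n,m)$ has $V^\uparrow(G)=V(G)$, i.e.\ $\deg_G(v)>\tfrac{2m}{n}$ for every vertex $v$. Summing this strict inequality over all $n$ vertices gives $2m=\sum_{v\in V(G)}\deg_G(v)>n\cdot\tfrac{2m}{n}=2m$, a contradiction; hence the set of such graphs is empty.

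For part (1), the inclusion "$\supseteq$" is clear: if $G$ is $\tfrac{2m}{n}$-regular then every vertex has degree exactly $\tfrac{2m}{n}$, so $\deg_G(v)\le\tfrac{2m}{n}$ holds for all $v$, giving $V^\downarrow(G)=V(G)$. For "$\subseteq$", assume $V^\downarrow(G)=V(G)$, so $\deg_G(v)\le\tfrac{2m}{n}$ for every $v$. If some vertex $w$ had $\deg_G(w)<\tfrac{2m}{n}$, then $\sum_{v\in V(G)}\deg_G(v)<n\cdot\tfrac{2m}{n}=2m$, contradicting Euler's identity. Therefore $\deg_G(v)=\tfrac{2m}{n}$ for all $v$, i.e.\ $G$ is $\tfrac{2m}{n}$-regular (in particular this forces $n\mid 2m$).

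I do not expect any real obstacle here: the only subtlety worth a sentence is that $\tfrac{2m}{n}$ need not be an integer, so "$\tfrac{2m}{n}$-regular" should be read as the (possibly vacuous) condition that all degrees equal $\tfrac{2m}{n}$; the averaging argument above handles this uniformly, since a strict inequality at even one vertex already breaks the sum. The whole proof is two applications of the principle that a finite list of reals summing to its own average-times-count cannot have all terms strictly below, nor all strictly above, the average.
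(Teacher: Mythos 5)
Your proof is correct and is exactly the argument the paper has in mind: the paper states this lemma without a written proof, merely remarking that it follows from Euler's identity $\sum_{v\in V(G)}\deg_G(v)=2m$, and your averaging argument (no finite list of reals can lie entirely strictly above its mean, and if all lie weakly below the mean they must all equal it) is the intended justification. Nothing to add.
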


\begin{lem}\label{l0}
Let $G$ be a connected  $n-$vertex irregular graph, $n\geq3$, and $e=uv\in E^\downarrow(G)$ is not a cut edge of $G$. If $G^- = G -e$ then $s(G)< s(G^-)$.
\end{lem}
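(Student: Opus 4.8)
The plan is to reduce everything to a one-sided sum. For any graph $H$ on $n$ vertices the handshake identity $\sum_{w\in V(H)}\bigl(\deg_H(w)-\tfrac{2|E(H)|}{n}\bigr)=0$ shows that the total excess of the degrees over $V^\uparrow(H)$ equals the total deficiency over $V^\downarrow(H)$, and therefore
$$s(H)=2\sum_{w\in V^\uparrow(H)}\Bigl(\deg_H(w)-\tfrac{2|E(H)|}{n}\Bigr).$$
Abbreviating $D^\uparrow(G)=\sum_{w\in V^\uparrow(G)}\bigl(\deg_G(w)-\tfrac{2m}{n}\bigr)$ and $D^\uparrow(G^-)=\sum_{w\in V^\uparrow(G^-)}\bigl(\deg_{G^-}(w)-\tfrac{2(m-1)}{n}\bigr)$, it suffices to prove $D^\uparrow(G^-)>D^\uparrow(G)$. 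Write $d=\tfrac{2m}{n}$ and $d'=\tfrac{2(m-1)}{n}=d-\tfrac{2}{n}$, so that $d'<d$.

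Next I would collect two structural observations. Since $G$ is irregular, not all of its degrees equal the average $d$, so by the preceding lemma $V^\downarrow(G)\neq V(G)$, i.e. $V^\uparrow(G)\neq\emptyset$; write $p=|V^\uparrow(G)|\geq 1$. Deleting $e=uv$ changes only the degrees of $u$ and $v$, each by $-1$; since $e\in E^\downarrow(G)$ we have $u,v\in V^\downarrow(G)$, hence $\deg_{G^-}(u)=\deg_G(u)-1\leq d-1<d-\tfrac{2}{n}=d'$ (this is the only place the hypothesis $n\geq 3$ is used), and likewise for $v$, so neither $u$ nor $v$ lies in $V^\uparrow(G^-)$. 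Every vertex of $V^\uparrow(G)$ is distinct from $u,v$ (again because $u,v\in V^\downarrow(G)$), keeps its degree, and still exceeds $d'$; thus $V^\uparrow(G^-)=V^\uparrow(G)\sqcup S$ (a disjoint union), where $S$ is the possibly empty set of vertices $w\in V^\downarrow(G)\setminus\{u,v\}$ with $d'<\deg_G(w)\leq d$.

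The computation is then immediate:
$$D^\uparrow(G^-)=\sum_{w\in V^\uparrow(G)}\bigl(\deg_G(w)-d+\tfrac{2}{n}\bigr)+\sum_{w\in S}\bigl(\deg_G(w)-d'\bigr)=D^\uparrow(G)+\frac{2p}{n}+\sum_{w\in S}\bigl(\deg_G(w)-d'\bigr).$$
Each summand over $S$ is strictly positive by the definition of $S$, and $\tfrac{2p}{n}>0$ because $p\geq 1$; hence $D^\uparrow(G^-)>D^\uparrow(G)$, and therefore $s(G^-)=2D^\uparrow(G^-)>2D^\uparrow(G)=s(G)$, as claimed. As a by-product this even gives the explicit gap $s(G^-)-s(G)\geq\tfrac{4p}{n}$.

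The delicate points where I would be careful are: (i) verifying that $u$ and $v$ really drop out of $V^\uparrow$ after deletion, which rests on $\deg_G(u)-1<d-\tfrac{2}{n}$, i.e. on $n\geq 3$ (for $n\le 2$ the statement is vacuous, the only connected graph being the regular $K_2$); and (ii) confirming that the vertices newly pushed above the average — collected in $S$ — can only raise $D^\uparrow$, never lower it, which is forced by the one-sided formula for $s$ and so involves no cancellation among $V^\downarrow$-vertices. I do not see that the hypothesis "$e$ is not a cut edge" is needed for the inequality itself; it only guarantees that $G^-$ is again connected, which is what makes the lemma usable inside an extremal argument over $H(n)$.
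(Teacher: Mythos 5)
Your proof is correct, and it takes a genuinely cleaner route than the paper's. The paper computes $s(G^-)-s(G)$ directly as a two-sided sum over all vertices: it singles out $u$ and $v$ (which together gain $2-\frac{4}{n}$), the vertices of $V^\downarrow(G)$ that cross above the new average (its auxiliary set $V_1^\downarrow(G)$, essentially your $S$), the remaining vertices of $V^\downarrow(G)$ (each losing $\frac{2}{n}$), and the vertices of $V^\uparrow(G)$ (each gaining $\frac{2}{n}$); because the stay-below vertices contribute negatively, positivity of the total is not immediate and the paper must finish with the counting bound $\frac{2}{n}\left[\,|V^\downarrow(G)\setminus V_1^\downarrow(G)|-|V^\uparrow(G)|\,\right]\le \frac{2(n-2)}{n}<2$, which again uses $|V^\uparrow(G)|\ge 1$. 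Your reduction to the one-sided quantity $D^\uparrow$ via the handshake identity $s(H)=2\sum_{w\in V^\uparrow(H)}\left(\deg_H(w)-\frac{2|E(H)|}{n}\right)$ eliminates that cancellation entirely: $V^\uparrow$ can only grow when the edge is deleted, each surviving member's contribution rises by exactly $\frac{2}{n}$, and each newcomer in $S$ contributes positively, so the inequality is manifest and you get the quantitative gap $s(G^-)-s(G)\ge\frac{4p}{n}$ for free. Both arguments ultimately rest on the same two facts --- that $u,v$ remain below the new average (the only place $n\ge 3$ enters) and that $V^\uparrow(G)\neq\emptyset$ for an irregular graph --- and your closing observation is also accurate: the cut-edge hypothesis plays no role in the inequality itself and only ensures $G^-$ stays in $H(n)$ for the subsequent extremal argument.
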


\begin{proof}
Let  $V_1^\downarrow(G)=\{v\in  V^\downarrow(G): deg_G(v)>\frac{2m-2}{n}\}$. By definition,
\begin{eqnarray*}
s(G^-)-s(G)&=&\frac{2m-2}{n}- deg_G(u)+1+ \frac{2m-2}{n}- deg_G(v)+1\\
&-&[\frac{2m}{n}- deg_G(u)+ \frac{2m}{n}- deg_G(v)]
+\sum_{w\in V_1^\downarrow(G)}2\left[deg_G(w)-\frac{2m-1}{n}\right]\\
&-&\sum_{w\in  V^\downarrow(G)\setminus  (V_1^\downarrow(G)\cup\{u,v\})}\frac{2}{n}
+\sum_{v \in  V^\uparrow(G)}\frac{2}{n}\\
&\geq&2-\frac{2}{n}\left[\left|  V^\downarrow(G)\setminus V_1^\downarrow(G) \right| -\left|V^\uparrow(G)\right|\right]>0,
\end{eqnarray*}
proving the lemma.
\end{proof}

\begin{lem}\label{l1}
Let $G$ be a connected irregular $n-$vertex graph, $n\geq3$, and $\{w,z\}\in \overline{E}^\uparrow(G)$. If
${G^+} = G + f$, $f=wz$, then $s(G)<s(G^+)$.
\end{lem}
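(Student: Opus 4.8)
The plan is to mirror the proof of Lemma~\ref{l0}, exchanging the roles of $V^\downarrow$ and $V^\uparrow$ and of edge deletion and edge insertion. Write $m=|E(G)|$, so that $|E(G^+)|=m+1$ and the average degree jumps from $\frac{2m}{n}$ to $\frac{2m+2}{n}$. The first step is to observe that $w$ and $z$ stay strictly above the new average in $G^+$: since $deg_G(w)>\frac{2m}{n}$ and $n\ge 3$, we have $deg_{G^+}(w)=deg_G(w)+1>\frac{2m}{n}+1>\frac{2m}{n}+\frac{2}{n}=\frac{2m+2}{n}$, and likewise for $z$. Hence the joint contribution of $w$ and $z$ to $s(G^+)-s(G)$ is exactly $\bigl[(deg_G(w)+1-\tfrac{2m+2}{n})-(deg_G(w)-\tfrac{2m}{n})\bigr]+\bigl[\text{same for }z\bigr]=2\bigl(1-\tfrac{2}{n}\bigr)$.

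Next I would account for the vertices $v\neq w,z$, whose degrees are unchanged but whose deviation is now measured against the larger average. Each $v\in V^\downarrow(G)$ has $deg_G(v)\le\frac{2m}{n}<\frac{2m+2}{n}$ and so contributes $+\frac{2}{n}$; each $v\in V^\uparrow(G)\setminus\{w,z\}$ with $deg_G(v)\ge\frac{2m+2}{n}$ contributes $-\frac{2}{n}$; and for the set $V_1^\uparrow(G)=\{v\in V^\uparrow(G)\setminus\{w,z\}\mid deg_G(v)<\frac{2m+2}{n}\}$ the deviation changes sign, giving a contribution $2\bigl(\tfrac{2m+1}{n}-deg_G(v)\bigr)$, which is $>-\frac{2}{n}$ because $deg_G(v)<\frac{2m+2}{n}$. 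Collecting terms,
\[
s(G^+)-s(G)=2\Bigl(1-\tfrac{2}{n}\Bigr)+\tfrac{2}{n}\bigl|V^\downarrow(G)\bigr|-\tfrac{2}{n}\bigl|\{v\in V^\uparrow(G)\setminus\{w,z\}\mid deg_G(v)\ge\tfrac{2m+2}{n}\}\bigr|+\sum_{v\in V_1^\uparrow(G)}2\Bigl(\tfrac{2m+1}{n}-deg_G(v)\Bigr).
\]

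To finish, bound the last sum below by $-\frac{2}{n}\,|V_1^\uparrow(G)|$ and use $\bigl|\{v\in V^\uparrow(G)\setminus\{w,z\}\mid deg_G(v)\ge\tfrac{2m+2}{n}\}\bigr|+|V_1^\uparrow(G)|=|V^\uparrow(G)|-2$ to get $s(G^+)-s(G)\ge 2+\tfrac{2}{n}\bigl(|V^\downarrow(G)|-|V^\uparrow(G)|\bigr)$. Since $\sum_{v}deg_G(v)=2m$ forces $V^\downarrow(G)\neq\emptyset$, we have $|V^\uparrow(G)|\le n-1$, so $|V^\downarrow(G)|-|V^\uparrow(G)|\ge 2-n$ and therefore $s(G^+)-s(G)\ge 2+\tfrac{2}{n}(2-n)=\tfrac{4}{n}>0$, which is the claim. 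I expect the only delicate points to be the use of $n\ge 3$ to keep $w$ and $z$ above the new average, and checking that the boundary case $deg_G(v)=\frac{2m+2}{n}$ (when it can occur) is consistently placed in one of the buckets; these are bookkeeping matters rather than genuine obstacles, so the argument is essentially the sign-tracking computation dual to that of Lemma~\ref{l0}.
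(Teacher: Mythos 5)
Your proof is correct and follows essentially the same route as the paper's: the same decomposition of $s(G^+)-s(G)$ into the contribution of $w,z$, the vertices of $V^\downarrow(G)$, the sign-flipping set $V_1^\uparrow(G)$, and the remaining vertices of $V^\uparrow(G)$, followed by the same lower bound. If anything, your write-up is the more complete one, since you supply the final justification (using $V^\downarrow(G)\neq\emptyset$ to get $s(G^+)-s(G)\geq \frac{4}{n}>0$) that the paper merely asserts, and you are careful to exclude $w,z$ from $V_1^\uparrow(G)$, a point the paper glosses over.
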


\begin{proof}
Let  $V_1^\uparrow(G)=\{v\in  V^\uparrow(G): deg_G(v)\leq\frac{2m+2}{n}\}$. By definition,
\begin{eqnarray*}
s(G^+)-s(G)&=&deg_G(u)+1-\frac{2m+2}{n}+ deg_G(v)+1-\frac{2m+2}{n} \\
&-&[deg_G(u)-\frac{2m}{n}+deg_G(v)- \frac{2m}{n} ]
+\sum_{w\in V_1^\uparrow(G)}2\left[\frac{2m+1}{n}-deg_G(w)\right]\\
&+&\sum_{w\in V^\downarrow(G)}\frac{2}{n}
-\sum_{w\in  V^\uparrow(G)\setminus  (V_1^\uparrow(G)\cup\{u,v\})}\frac{2}{n}\\
&\geq&2-\frac{2}{n}\left[\left|  V^\uparrow(G)\setminus V_1^\uparrow(G) \right| -\left|V^\downarrow(G)\right|\right]>0,
\end{eqnarray*}
proving the lemma.
\end{proof}

Suppose $G$  is a connected irregular graph with a cut edge $e=uv\in E^\downarrow(G)$. It is clear that there exists a vertex $w\in V^\uparrow(G)$ such that at least one of the graphs $G-uv+uw$ and $G-uv+vw$ is connected. Without loss of generality, we assume that $G-uv+uw$ is connected. Then the edge $uw$ is called a \textit{connectedness factor} of $G - e$ with respect to $V^\downarrow(G)$ and $V^\uparrow(G)$.

\begin{lem}\label{l2}
Let $G$ be a connected irregular $n-$vertex graph, $n\geq3$, and $e=uv\in E^\downarrow(G)$ is an cut edge of $G$. If
$w\in V^\uparrow(G)$,  $G^*=G-uv+uw$ and $uw$ is a connectedness factor of $G-uv$ with respect to $V^\downarrow(G)$ and
$V^\uparrow(G)$, then $s(G)<s(G^*)$.
\end{lem}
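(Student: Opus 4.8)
The plan is to exploit the feature that separates this case from Lemmas~\ref{l0} and~\ref{l1}: moving from $G$ to $G^*=G-uv+uw$ removes one edge and inserts one edge, so $G^*$ still has $n$ vertices and $m$ edges. Hence the reference value $\frac{2m}{n}$ does not change, and every vertex $x$ not incident to $uv$ or $uw$ contributes the same quantity $\left|deg_G(x)-\frac{2m}{n}\right|$ to $s$ in $G$ and in $G^*$. Thus $s(G^*)-s(G)$ will be carried entirely by the three vertices $u$, $v$, $w$, and the proof reduces to comparing their terms.

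First I would note that $uw\notin E(G)$: since $uv$ is a cut edge, $G-uv$ is disconnected, so if $uw$ were already an edge then $G-uv+uw$ would coincide with $G-uv$ and be disconnected, contradicting that $uw$ is a connectedness factor (which forces $G^*$ to be connected, so that $s(G^*)$ is even defined). Moreover $u\neq w$ and $v\neq w$ automatically, because $V^\downarrow(G)$ and $V^\uparrow(G)$ are disjoint; so no loop or multi-edge is created. Consequently the degree changes are precisely $deg_{G^*}(u)=deg_G(u)-1+1=deg_G(u)$, $deg_{G^*}(v)=deg_G(v)-1$, and $deg_{G^*}(w)=deg_G(w)+1$, all other degrees being unchanged. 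In particular $u$'s contribution to $s$ is unchanged.

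Next I would evaluate the two remaining contributions. Since $v\in V^\downarrow(G)$ we have $deg_G(v)\le\frac{2m}{n}$, hence $deg_{G^*}(v)=deg_G(v)-1<\frac{2m}{n}$, so $\left|deg_{G^*}(v)-\frac{2m}{n}\right|=\frac{2m}{n}-deg_G(v)+1=\left|deg_G(v)-\frac{2m}{n}\right|+1$. Symmetrically, since $w\in V^\uparrow(G)$ we have $deg_G(w)>\frac{2m}{n}$, hence $deg_{G^*}(w)=deg_G(w)+1>\frac{2m}{n}$, so $\left|deg_{G^*}(w)-\frac{2m}{n}\right|=\left|deg_G(w)-\frac{2m}{n}\right|+1$. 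Summing over all vertices then gives $s(G^*)-s(G)=0+1+1=2>0$, which proves the lemma.

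There is no real obstacle here: the argument is shorter than those of Lemmas~\ref{l0} and~\ref{l1} precisely because $m$ is preserved, so the global shift by $\pm\frac{2}{n}$ over all vertices does not occur. The only points needing a moment's care are the observation $uw\notin E(G)$ (which is what keeps $deg_G(u)$ fixed and the edge count at $m$) and the boundary case $deg_G(v)=\frac{2m}{n}$, where $v$'s term rises from $0$ to $1$ — still an increase of exactly $1$, so the final count is unaffected.
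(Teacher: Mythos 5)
Your proof is correct and follows essentially the same route as the paper: since $|E(G^*)|=|E(G)|=m$, the threshold $\frac{2m}{n}$ is unchanged, only $v$ and $w$ change degree, and each of their contributions to $s$ increases by exactly $1$, giving $s(G^*)-s(G)=2$. Your added observations (that $uw\notin E(G)$ and that $u$'s degree is preserved) are implicit in the paper's computation and only make the argument more careful.
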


\begin{proof}
By definition,
\begin{eqnarray*}
s(G^*)-s(G)&=&\frac{2m}{n}-(deg_G(v)-1)+deg_G(w)+1-\frac{2m}{n}\\
&-&\left[\frac{2m}{n}-deg_G(v)+deg_G(w)-\frac{2m}{n}\right]\\
&=&2,
\end{eqnarray*}
as desired.
\end{proof}

\begin{lem}\label{l3}
Suppose $G$ is a connected irregular  $n-$vertex graph such that $V^\uparrow(G)=\{u_1,\ldots,u_k\}$ is a clique, $V^\downarrow(G)=\{v_1,\ldots,v_{n-k}\}$ is a stable set and $\sum_{i=1}^{n-k}deg_G(v_i)<|V^\downarrow(G)|k$. Let $N[G,v_i]=\{u_{i_1},\ldots,u_{i_{deg_G(v_i)}}\}$, for $1\leq i \leq n-k$.
 \begin{enumerate}
 \item If $|V^\downarrow(G)|>|V^\uparrow(G)|$ and  $G^\ddagger=G+\{v_iu_{j}: 1\leq i\leq n-k~and~u_j\in V^\uparrow(G)\setminus{N}[v,G]\}$, then $s(G)<s(G^\ddagger)$.
 \item If   $|V^\downarrow(G)|\leq|V^\uparrow(G)|$ and  $G^\intercal=G-\{v_iu_{i_j}: 1\leq i\leq n-k~and~2\leq j\leq deg_G(v_i) \}$, then $s(G)\leq s(G^\intercal)$.
 \end{enumerate}
\end{lem}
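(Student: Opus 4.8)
The plan is to treat the two parts of Lemma \ref{l3} in essentially the same spirit as Lemmas \ref{l0}--\ref{l2}: compute $s(G^\ddagger)-s(G)$ (resp. $s(G^\intercal)-s(G)$) directly from the definition, splitting the sum over $V^\uparrow$ and $V^\downarrow$, and carefully tracking how the average degree $\tfrac{2m}{n}$ moves. Note first that in both operations the clique $V^\uparrow(G)$ and the stable set $V^\downarrow(G)$ are preserved as such, and the hypothesis $\sum_{i=1}^{n-k}\deg_G(v_i)<|V^\downarrow(G)|\,k$ guarantees that at least one nonedge $v_iu_j$ with $u_j\notin N[v_i,G]$ exists, so the edge additions in part (1) are nonempty; symmetrically each $v_i$ with $\deg_G(v_i)\ge 2$ contributes a deletion in part (2). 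The key structural point is that after the modification every stable-set vertex is either isolated-from-the-clique-complement (part 1: $G^\ddagger$ makes every $v_i$ adjacent to all of $V^\uparrow$) or pendant-or-isolated toward the clique (part 2: each $v_i$ keeps exactly one clique-neighbour).

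For part (1): let $G^\ddagger$ add $t:=|V^\downarrow(G)|\,k-\sum_i\deg_G(v_i)>0$ edges, all incident to $V^\downarrow$-vertices on one end and $V^\uparrow$-vertices on the other. The new edge count is $m+t$, so the average rises by $\tfrac{2t}{n}$. I would write
$$
s(G^\ddagger)-s(G)=\sum_{v\in V^\downarrow}\Big[\big|\deg_{G^\ddagger}(v)-\tfrac{2(m+t)}{n}\big|-\big|\deg_G(v)-\tfrac{2m}{n}\big|\Big]+\sum_{u\in V^\uparrow}\Big[\big|\deg_{G^\ddagger}(u)-\tfrac{2(m+t)}{n}\big|-\big|\deg_G(u)-\tfrac{2m}{n}\big|\Big].
$$
On $V^\uparrow$ the clique vertices all gain degree and stay above the average (here one uses that adding edges among/toward a clique keeps those vertices in $V^\uparrow$, which should be checked from $|V^\downarrow|>|V^\uparrow|$); on $V^\downarrow$, after the change every $v_i$ has degree exactly $k$, and I expect $k\le \tfrac{2(m+t)}{n}$ to hold under the stated hypotheses, so each term there is of the form (average minus degree), which only grows the $V^\downarrow$-deviation while the total degree gain is $2t$. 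Collecting signed contributions and using $|V^\downarrow|>|V^\uparrow|$ to dominate the $\pm\tfrac{2t}{n}$ bookkeeping exactly as in Lemma \ref{l0}, one gets a strictly positive lower bound. Part (2) is the mirror image: deleting $\sum_i(\deg_G(v_i)-1)$ edges lowers the average, each $v_i$ ends with degree $1$ (or $0$ if it had degree $0$ already, but connectedness and the setup force degree $\ge 1$), the clique vertices lose degree but, because now $|V^\downarrow|\le|V^\uparrow|$, the net effect on $s$ is a non-strict increase; the weak inequality $s(G)\le s(G^\intercal)$ is the honest conclusion since equality can occur when the deleted edges exactly balance.

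The main obstacle I anticipate is not the algebra but the \emph{membership claims}: after the modification, is it still true that $V^\uparrow(G^\ddagger)=V^\uparrow(G)$ and $V^\downarrow(G^\ddagger)=V^\downagger(G)$, i.e. that no vertex crosses the (shifted) average? Equivalently I need $\max_{v\in V^\downarrow}\deg_{G^\ddagger}(v)=k\le \tfrac{2(m+t)}{n}$ and $\min_{u\in V^\uparrow}\deg_{G}(u)\ge$ the old average minus the drop — these are where the hypotheses $\sum_i\deg_G(v_i)<|V^\downarrow|k$ and the comparison of $|V^\uparrow|$ with $|V^\downarrow|$ must be used quantitatively, and getting the direction of every inequality right when $\tfrac{2m}{n}$ is not an integer is the delicate part. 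Once those inclusions are secured, each absolute value resolves with a fixed sign and the difference $s(G^{\ddagger})-s(G)$ (resp. $s(G^{\intercal})-s(G)$) telescopes to an explicit expression that is manifestly positive (resp. nonnegative), completing the proof.
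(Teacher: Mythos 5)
Your proposal follows essentially the same route as the paper: split $s(G^\ddagger)-s(G)$ (resp.\ $s(G^\intercal)-s(G)$) into sums over $V^\uparrow(G)$ and $V^\downarrow(G)$, use the fact that after the modification every stable-set vertex has degree $k$ (resp.\ degree $1$) to evaluate the degree sums against the shifted average, and reduce the difference to $\frac{\alpha}{n}\left(|V^\downarrow(G)|-|V^\uparrow(G)|\right)$ (resp.\ its negative), whose sign is settled by the hypothesis comparing $|V^\downarrow(G)|$ with $|V^\uparrow(G)|$. The membership-preservation issue you single out as the delicate point is in fact passed over silently in the paper's own proof, which simply writes the post-modification value of $s$ assuming $V^\uparrow$ and $V^\downarrow$ are unchanged, so your sketch is at least as careful as the published argument.
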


\begin{proof}
Let $\alpha=|\{v_iu_{j}: 1\leq i\leq n-k~and~u_j\in V^\uparrow(G)\setminus{N}[v,G]\} |$ and .
For prove (1) by definition,
\begin{eqnarray*}
s(G^\ddagger)-s(G)&=&k(k-1)+(n-k)k-\frac{2m+\alpha}{n}|V^\uparrow(G)|
+\frac{2m+\alpha}{n}|V^\downarrow(G)|-(n-k)k\\
&-&\left[k(k-1)+(n-k)k-\alpha-\frac{2m}{n}|V^\uparrow(G)|+\frac{2m}{n}|V^\downarrow(G)|-(n-k)k+\alpha   \right]\\
&=&\frac{\alpha}{n}|V^\downarrow(G)|-\frac{\alpha}{n}|V^\uparrow(G)|>0,
\end{eqnarray*}
as desired. For Prove (2) by  definition and similar with last proof, $s(G^\intercal)-s(G)=\frac{\alpha}{n}|V^\uparrow(G)|- \frac{\alpha}{n}|V^\downarrow(G)|\geq0$, as desired.

\end{proof}

Let $S^1(n,k)$ be a split graph such that all vertices in its stable set are of degree one. Define:
$$H(n,1):=\{G\in H(n):for~ one~ 1\leq k\leq n-1, G\cong S^1(n,k)\},$$
$$CH(n):=\{G\in H(n):for~ one~ 1\leq k\leq n-2, G\cong CS(n,k)\}.$$

\begin{lem}\label{l4}
Let $k\in N$ and $1< k < n-1$. Then $\max_{G\in H(n,1)}
s(G) =s(S^1(n,k))$, where
$$k=\left\{\begin{array}{ll}
\frac{2}{3}n & 3 \mid n\\
\frac{2}{3}(n-1) & 3 \mid n - 1\\
1~ and~ 2 & n=5\\
 \frac{2}{3}(n+1) & n\neq5~ and~ 3 \mid n-2\\
\end{array}\right..$$
\end{lem}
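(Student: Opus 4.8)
The plan is to reduce the optimization over the whole family $H(n,1)$ to an explicit one‑variable problem in $k$, the clique size, and then to a calculus/parity argument. First I would observe that for a graph $G\cong S^1(n,k)$ in $H(n,1)$, the $n-k$ stable vertices all have degree $1$, while each of the $k$ clique vertices has degree at least $k-1$. Connectivity forces every stable vertex to be adjacent to some clique vertex, so the number of edges is $m = \binom{k}{2} + (n-k) + t$ for some $t\ge 0$ counting the extra clique–clique edges already present (here actually the clique need not be complete inside $S^1(n,k)$; but one shows by applying Lemma \ref{l1} repeatedly to non‑adjacent pairs inside $V^\uparrow(G)$ that the extremal graph has the clique complete, hence $t=0$ and the clique vertices have degree $k-1$ plus their share of pendant attachments). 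So among the candidates maximizing $s$, the clique is complete and $m=\binom{k}{2}+(n-k)$.

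Next I would make the stable vertices' attachments as balanced as possible: since $s$ is a sum of absolute deviations from $2m/n$, and Lemmas \ref{l0}–\ref{l3} let us push edges between $E^\downarrow$ and the non‑edges of $V^\uparrow$ in the direction that increases $s$, the extremal configuration within $H(n,1)$ has the $n-k$ pendant edges distributed among the $k$ clique vertices as evenly as possible (indeed, for the complete‑split graph $CS(n,k)$ every clique vertex gets exactly the stable set as neighbours, which is the maximally balanced case, but $CS(n,k)\in CH(n)$ need not lie in $H(n,1)$ unless every stable vertex has degree $1$, i.e. $k=n-1$; so here the relevant extremal $S^1(n,k)$ is the one with pendant edges spread $\lceil (n-k)/k\rceil$ or $\lfloor (n-k)/k\rfloor$ per clique vertex). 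Computing $2m/n = \big(k(k-1)+2(n-k)\big)/n$ and summing $|deg_G(v)-2m/n|$ over the two degree classes gives a closed‑form expression $s(S^1(n,k)) = f(n,k)$, a rational function of $k$ for fixed $n$; I would write it out explicitly, separating the contribution $(n-k)\big(2m/n-1\big)$ of the stable set from the contribution of the clique.

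Finally I would maximize $f(n,k)$ over the integers $1<k<n-1$. Treating $k$ as a real variable, $f(n,k)$ is (up to the parity/floor corrections from the uneven pendant distribution) a cubic‑type function whose unconstrained maximum sits near $k = \tfrac{2}{3}n$, mirroring the $\tfrac{n}{3}$ that appears in Theorem \ref{t1} via the clique/stable swap $k\leftrightarrow n-k$. Then I would check the nearest admissible integers in each residue class of $n$ mod $3$, handle the exceptional small case $n=5$ by direct computation, and verify the claimed values. The main obstacle I anticipate is controlling the floor/ceiling corrections coming from the non‑uniform distribution of the $n-k$ pendant edges over the $k$ clique vertices: one must show these corrections never move the optimum away from the stated $k$, which likely requires a short case analysis on $n \bmod 3$ together with a monotonicity estimate of the form $f(n,k+1)-f(n,k)$ changing sign exactly once, analogous to the argument behind Theorem \ref{t1}.
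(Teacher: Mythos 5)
Your proposal follows essentially the same route as the paper: write $s(S^1(n,k))$ as an explicit function of $k$ for fixed $n$ (the paper states $s(S^1(n,k))=\tfrac{2}{n}(3k-k^2-n)(n-k)$, up to a sign typo for $\tfrac{2}{n}(n-k)(k^2-3k+n)$), locate the real maximum at $k=\tfrac{2}{3}n$ by elementary calculus, and then compare $g(\lfloor k\rfloor)$ and $g(\lceil k\rceil)$ in each residue class of $n$ modulo $3$, with $n=5$ done by hand. The pendant-distribution issue you flag as the main obstacle in fact disappears: since every clique vertex has degree at least $k-1\geq 2m/n$ once $k\geq 3$, the quantity $s$ equals twice the total deviation of the $n-k$ degree-one vertices and is independent of how the pendants are attached, so no floor/ceiling correction arises.
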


\begin{proof}
By definition $s(S^1(n,k))=\frac{2}{n}(3k-k^2-n)(n-k)$. For a given $n$, define $g(k)=\frac{2}{n}(3k-k^2-n)(n-k)$. By a simple calculation
the maximal value of $g (k)$ is obtained by $k=\frac{2}{3}n$. Since $k$ is an integer, we have to determine $\lceil k\rceil$ and  $\lfloor k \rfloor$ and then comparing $g(\lceil k\rceil)$ and  $g(\lfloor k \rfloor)$ in all previous cases give the our result.

\end{proof}

\begin{lem}\label{l5}
Let $k\in N$ and $1< k < n-1$. Then $\max_{G\in H(n,1)}
s(G) < \max_{G\in CH(n)}s(G)$.
\end{lem}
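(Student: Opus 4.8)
The plan is to compare the two maxima via their closed forms and, for the extremal member of $H(n,1)$, to name one competitor in $CH(n)$ that already beats it. By Lemma~\ref{l4}, $\max_{G\in H(n,1)}s(G)=s\bigl(S^{1}(n,k_{0})\bigr)$ for the value $k_{0}$ displayed there (so $k_{0}\in\{\tfrac{2n}{3},\tfrac{2(n-1)}{3},\tfrac{2(n+1)}{3}\}$ up to the small exceptions), and its proof records $s\bigl(S^{1}(n,k)\bigr)=\tfrac{2}{n}(n+k^{2}-3k)(n-k)$; Theorem~\ref{t1} gives $s\bigl(CS(n,k')\bigr)=\tfrac{2}{n}k'(n-k')(n-1-k')$ for $1\le k'\le n-2$. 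The competitor I would pick is $CS(n,n-k_{0})$, which lies in $CH(n)$ whenever $2\le k_{0}\le n-1$. Putting $k'=n-k_{0}$ in the two formulas and simplifying yields the clean identity
$$s\bigl(CS(n,n-k_{0})\bigr)-s\bigl(S^{1}(n,k_{0})\bigr)=\frac{2}{n}\,(n-k_{0})\,(2k_{0}-n).$$

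Since $n-k_{0}>0$, this difference is positive precisely when $k_{0}>\tfrac{n}{2}$, and that holds for every admissible $k_{0}$ of Lemma~\ref{l4}: $2\cdot\tfrac{2n}{3}-n=\tfrac{n}{3}$, $2\cdot\tfrac{2(n-1)}{3}-n=\tfrac{n-4}{3}$ (positive for $n>4$) and $2\cdot\tfrac{2(n+1)}{3}-n=\tfrac{n+4}{3}$ are all positive. Therefore, for such $n$,
$$\max_{G\in CH(n)}s(G)\;\ge\;s\bigl(CS(n,n-k_{0})\bigr)\;>\;s\bigl(S^{1}(n,k_{0})\bigr)\;=\;\max_{G\in H(n,1)}s(G),$$
which is exactly the assertion. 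Note that this argument does not need the exact maximiser inside $CH(n)$: only that $CS(n,n-k_{0})$ is one admissible member, so Theorem~\ref{t1} is invoked only for the value of $s\bigl(CS(n,k')\bigr)$, not for the optimisation it carries out.

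The only genuine nuisance I foresee is the low-end bookkeeping. One has to verify, in each of the three congruence classes, that $CS(n,n-k_{0})$ is really in $CH(n)$ (that is, $2\le k_{0}$ and $n-k_{0}\le n-2$), and one has to settle the finitely many small $n$ by hand: the exceptional $n=5$ of Lemma~\ref{l4}, and any $n$ small enough that $k_{0}\le\tfrac{n}{2}$ or that the graphs $S^{1}(n,k_{0})$ and $CS(n,n-k_{0})$ degenerate. For those one just tabulates $s$ on the handful of graphs available in $H(n,1)$ and $CH(n)$ and checks the strict inequality directly, taking care with the floor/ceiling choices inherited from Lemma~\ref{l4}.
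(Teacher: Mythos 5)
Your proposal is correct and rests on the same basic mechanism as the paper's proof: compare the closed-form expressions for $s$ on the two families. The difference is that the paper computes the exact value of $\max_{G\in CH(n)}s(G)$ from Theorem~\ref{t1} and subtracts, obtaining the three case-by-case expressions $\tfrac{2}{9}n$, $\tfrac{2}{9}\tfrac{(n+2)^2-18}{n}$, $\tfrac{2}{9}\tfrac{(n+4)(n-2)}{n}$, whereas you lower-bound that maximum by the single competitor $CS(n,n-k_0)$ and get the uniform identity $\tfrac{2}{n}(n-k_0)(2k_0-n)$. That is a genuine (if modest) simplification: for $3\mid n$ and $3\mid n-2$ your competitor coincides with the actual maximizer and your difference equals the paper's, while for $3\mid n-1$ it is the suboptimal $CS(n,\tfrac{n+2}{3})$ and your bound $\tfrac{2}{9}\tfrac{(n+2)(n-4)}{n}$ is weaker than the paper's exact gap but still positive for $n\geq 7$. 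Two remarks. First, your formulas are the correct ones: the paper's statement of Theorem~\ref{t1} has $\tfrac{n}{2}$ where it should read $\tfrac{2}{n}$, and the proof of Lemma~\ref{l4} writes $s(S^1(n,k))=\tfrac{2}{n}(3k-k^2-n)(n-k)$, which is negative and must be $\tfrac{2}{n}(n+k^2-3k)(n-k)$ as you have it; you silently repaired both, and your verification $\tfrac{2}{n}\cdot\tfrac{n}{3}\cdot\tfrac{n}{3}=\tfrac{2n}{9}$ matches the paper's stated gap, confirming the repair. Second, the low-end bookkeeping you flag is real and not entirely cosmetic: for $n=4$ one has $k_0=2=n/2$, so your competitor $CS(4,2)$ only ties $S^1(4,2)$ (both give $s=2$), and you must fall back on $CS(4,1)$ (with $s=3$) to get strictness; together with $n=5$ these are the only cases needing a hand check, and you correctly anticipated them.
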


\begin{proof}
Let $\max_{G\in CH(n)}s(G)=\lambda$ and $\max_{G\in H(n,1)}
s(G)=\mu$.
By Theorem \ref{t1} and Lemma \ref{l4},
$$\lambda-\mu=\left\{\begin{array}{ll}
\frac{2}{9}n & 3 \mid n\\
\frac{2}{9}\frac{(n+2)^2-18}{n} & 3 \mid n - 1\\
 \frac{2}{9}\frac{(n+4)(n-2)}{n} & 3 \mid n-2
\end{array}\right.,$$
as desired.
\end{proof}


We are now ready to prove Conjecture \ref{con}.

\begin{thm} Let $H(n)$ be the set of all connected graphs $G$ with $n$ vertices. Then, $\max_{G\in H(n)}
s(G) = s(CS(n, k))$, where \
$$k=\left\{\begin{array}{ll}
\frac{n}{3} & 3 \mid n\\
\frac{n-1}{3} & 3 \mid n - 1\\
\frac{n-2}{3}~ and~ \frac{n+1}{3} & 3 \mid n-2
\end{array}\right..$$
\end{thm}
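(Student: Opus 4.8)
The plan is to start from an arbitrary connected graph $G \in H(n)$ achieving the maximum of $s$, and to transform it, through a sequence of local modifications each of which does not decrease $s$, into a complete split graph; then Theorem~\ref{t1} pins down the optimal $k$. The key point is that each of Lemmas~\ref{l0}--\ref{l3} gives an operation (deleting a non-cut edge inside $V^\downarrow$, adding a missing edge inside $V^\uparrow$, rerouting a cut edge, or saturating/stripping the bipartite part between the two degree classes) that strictly increases $s$ (or at least does not decrease it), while preserving connectedness. So a maximiser $G$ must be a fixed point of all these operations simultaneously.

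First I would argue that a maximiser $G$ cannot be regular, since $s$ vanishes on regular graphs but is positive on, say, $CS(n,1)$, which is connected; hence $G$ is irregular and both $V^\uparrow(G)$ and $V^\downarrow(G)$ are nonempty (the latter by the first Lemma, the former by its second part). Next, since $G$ is a maximiser, Lemma~\ref{l1} forces $V^\uparrow(G)$ to be a clique (no missing edge inside it), and Lemmas~\ref{l0} and~\ref{l2} together force $E^\downarrow(G) = \emptyset$: any edge inside $V^\downarrow(G)$ is either a non-cut edge, which we could delete to strictly increase $s$, or a cut edge, which we could reroute via a connectedness factor to a vertex of $V^\uparrow(G)$ to strictly increase $s$. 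Thus $V^\downarrow(G)$ is a stable set and $G$ is a split graph with clique $V^\uparrow(G)$ of some size $k$ and stable set $V^\downarrow(G)$ of size $n-k$. Now Lemma~\ref{l3} applies: if $G$ is not already complete split, then $\sum deg_G(v_i) < (n-k)k$, and one of its two parts applies depending on whether $|V^\downarrow(G)| > |V^\uparrow(G)|$ or not; part~(1) strictly increases $s$ by completing the bipartite part (contradiction), while part~(2) produces $S^1(n,k)$ with $s$ no smaller. So either $G$ is complete split, or $s(G) \le \max_{H\in H(n,1)} s(H)$, and by Lemma~\ref{l5} the latter is strictly less than $\max_{H\in CH(n)} s(H)$, so $G$ cannot be a maximiser in that case. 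Hence the maximum is attained on $CH(n)$, and Theorem~\ref{t1} (together with the case analysis $3\mid n$, $3\mid n-1$, $3\mid n-2$) identifies the extremal $k$.

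The main obstacle I expect is the bookkeeping around cut edges and connectedness: one must be sure that after applying Lemma~\ref{l2} to eliminate a cut edge in $E^\downarrow$ the graph stays connected and that the process of removing \emph{all} edges inside $V^\downarrow$ terminates without ever being forced into a regular graph or losing connectivity — i.e., the operations have to be ordered carefully, and one should probably phrase the argument as "take a maximiser; among maximisers take one with the fewest edges inside $V^\downarrow(G)$" (or with some other extremal auxiliary parameter) to avoid an infinite descent. A second delicate point is that Lemma~\ref{l3}(2) only gives $s(G) \le s(G^\intercal)$, not strict inequality, so the reduction to $H(n,1)$ is not by itself enough to exclude $G$; it is the strict gap in Lemma~\ref{l5} that closes the argument, and one must make sure the chain $s(G) \le \max_{H(n,1)} s < \max_{CH(n)} s$ is assembled in the right direction. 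Everything else is a routine assembly of the already-proved lemmas.
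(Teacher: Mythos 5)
Your proof follows essentially the same route as the paper's: the paper likewise reduces an arbitrary irregular connected graph to a split graph via Lemmas~\ref{l0}--\ref{l2}, then to $S^1(n,k)$ or $CS(n,k)$ via Lemma~\ref{l3}, and concludes with Theorem~\ref{t1} and Lemmas~\ref{l4} and~\ref{l5}. Your ``a maximiser is a fixed point of all the operations'' framing is a slightly cleaner organisation of the same argument, since it sidesteps the termination and connectivity bookkeeping that the paper's ``repeated applications'' phrasing leaves implicit.
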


\begin{proof}
Suppose $G$ is not a regular graph. Then by  repeated applications of Lemmas \ref{l0}, \ref{l1} and \ref{l2}, we obtain a connected  split graph $H=S( n, k)$ such that $s(G)\leq s(H)$. Now by  repeated applications of Lemmas \ref{l3} on graph $H$, we obtain a $F=S^1( n, k)$ or $F=CS( n, k)$
such that $s(G)\leq s(H)\leq s(F)$ with equality if and only if $G\cong S^1( n, k)$ or $G\cong CS( n, k)$. The proof follows from Theorem \ref{t1} and
Lemmas \ref{l4}, \ref{l5}.
\end{proof}


\section{Connected Chemical Graphs}
The aim of this section is to continue the interesting paper \cite{15}. We will compute the degree deviation measure of chemical graphs under some conditions on the cyclomatic number.

Suppose  $n_i = n_i(G)$ is the number of vertices of degree $i$ in a graph $G$. It can be easily seen  that  $\sum_{i=1}^{\Delta(G)} n_i = |V(G)|$. If the graph $G$ has exactly $ n$ vertices, $m$ edges and $k$ components, then $c = m - n + k$ is called the \textit{cyclomatic number} of $G$. A chemical graph is a graph with a maximum degree of 4.
A connected chemical graphs with exactly $n$ vertices and cyclomatic number $c$  is called $(n,c)$-chemical graph.

\begin{lem}\label{lemm0}
\textrm{\rm (}See \cite{2-1}\textrm{\rm )} Let $G$ be an $(n,c)$-chemical graph.  Then
$$
n_{1}(G)=2-2c +n_3+2n_4 \hspace{5mm} \mbox{and} \hspace{5mm}
n_{2}(G) =2c+ n-2-2n_3-3n_4\,.
$$
\end{lem}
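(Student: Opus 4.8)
The plan is to derive both formulas purely from elementary degree-sequence bookkeeping: two linear identities in the $n_i$'s together with the definition of the cyclomatic number, which together form a solvable $2\times 2$ system for $n_1$ and $n_2$. First I would note that since a chemical graph has maximum degree $4$ and $G$ is connected on $n \ge 2$ vertices, every vertex of $G$ has degree in $\{1,2,3,4\}$; hence the identity $\sum_{i=1}^{\Delta(G)} n_i = |V(G)|$ recalled just before the lemma specializes to $n_1 + n_2 + n_3 + n_4 = n$.

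Next I would invoke the handshake identity $\sum_{v\in V(G)} deg_G(v) = 2|E(G)|$, which in terms of the $n_i$'s reads $n_1 + 2n_2 + 3n_3 + 4n_4 = 2m$. Because $G$ is connected it has $k = 1$ components, so the definition $c = m - n + k$ gives $m = n + c - 1$ and hence $2m = 2n + 2c - 2$. Substituting this into the handshake identity yields $n_1 + 2n_2 + 3n_3 + 4n_4 = 2n + 2c - 2$.

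Finally I would subtract the vertex-count identity $n_1 + n_2 + n_3 + n_4 = n$ from this equation: the $n_1$ term cancels and one is left with $n_2 + 2n_3 + 3n_4 = n + 2c - 2$, i.e.\ $n_2 = 2c + n - 2 - 2n_3 - 3n_4$, which is the second claimed formula. Substituting this expression back into the vertex-count identity and solving for $n_1$ gives $n_1 = n - n_2 - n_3 - n_4 = 2 - 2c + n_3 + 2n_4$, the first claimed formula. I do not anticipate any genuine obstacle here — the argument is a one-step linear elimination — and the only points worth a line of justification are that connectedness forces $k = 1$ (so that the cyclomatic number determines $m$) and that the degree bound defining a chemical graph guarantees that $n_1, n_2, n_3, n_4$ account for all $n$ vertices.
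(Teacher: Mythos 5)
Your proof is correct: the two identities $n_1+n_2+n_3+n_4=n$ and $n_1+2n_2+3n_3+4n_4=2(n+c-1)$ do yield exactly the stated formulas by elimination, and your remarks about connectedness giving $k=1$ and the degree bound $\Delta(G)\le 4$ are the right points to flag. The paper itself gives no proof of this lemma (it only cites an external reference), but your linear-elimination argument is the standard derivation and is surely what that reference does.
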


\begin{lem}\label{lemm1}
Let $T$ be a chemical tree with $n$ vertices. Then
\begin{eqnarray*}
s(T)&=&\frac{4(n-2)}{n}+\frac{n-2}{n}[2n_3(T)+4n_4(T)].
\end{eqnarray*}
\end{lem}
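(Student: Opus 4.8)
The plan is to expand $s(T)$ as a sum over vertices grouped by their degree, evaluate each absolute value explicitly using the fact that a tree has $m=n-1$ edges, and then eliminate $n_1$ and $n_2$ by invoking Lemma~\ref{lemm0} with $c=0$.

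First I would record that, since $T$ is a tree, $m=|E(T)|=n-1$, so the average degree is $\frac{2m}{n}=2-\frac{2}{n}$. Because $T$ is a chemical graph, every vertex has degree in $\{1,2,3,4\}$, and since $0<\frac{2}{n}\le 1$ for $n\ge 2$, each absolute value can be removed with a definite sign: a vertex of degree $1$ contributes $1-\frac{2}{n}$, one of degree $2$ contributes $\frac{2}{n}$, one of degree $3$ contributes $1+\frac{2}{n}$, and one of degree $4$ contributes $2+\frac{2}{n}$. Hence, writing $n_i=n_i(T)$,
$$ s(T)=n_1\Bigl(1-\tfrac{2}{n}\Bigr)+n_2\cdot\tfrac{2}{n}+n_3\Bigl(1+\tfrac{2}{n}\Bigr)+n_4\Bigl(2+\tfrac{2}{n}\Bigr). $$

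Next I would substitute the values $n_1=2+n_3+2n_4$ and $n_2=n-2-2n_3-3n_4$ furnished by Lemma~\ref{lemm0} (with cyclomatic number $c=0$) into this expression and collect terms. Grouping the part with no factor $\frac1n$ separately from the $\frac1n$-part, the first collapses to $4+2n_3+4n_4$ and the second to $-\frac1n(8+4n_3+8n_4)$, so
$$ s(T)=(4+2n_3+4n_4)\Bigl(1-\tfrac{2}{n}\Bigr)=\frac{n-2}{n}\bigl(4+2n_3+4n_4\bigr), $$
which is exactly the asserted identity after distributing $\frac{n-2}{n}$.

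The computation is entirely routine; the only point deserving a word of care is the removal of the absolute values, which uses $\frac{2m}{n}=2-\frac{2}{n}\in[1,2)$ for $n\ge 2$ so that each degree $1,2,3,4$ lies on the predicted side of the mean (the degenerate cases $n\le 2$, where $T$ is regular, make both sides vanish). So there is no real obstacle: the lemma follows directly from Lemma~\ref{lemm0} together with the degree bound.
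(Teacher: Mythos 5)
Your proposal is correct and follows essentially the same route as the paper: expand $s(T)$ over the degree classes using $\frac{2m}{n}=\frac{2(n-1)}{n}$, resolve each absolute value (the paper's coefficients $\frac{n-2}{n},\frac{2}{n},\frac{n+2}{n},\frac{2n+2}{n}$ are exactly your $1-\frac{2}{n},\frac{2}{n},1+\frac{2}{n},2+\frac{2}{n}$), and then substitute Lemma~\ref{lemm0} with $c=0$. Your explicit justification for removing the absolute values is a small point the paper leaves implicit, but the argument is the same.
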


\begin{proof}
Since $|E(T)|=n-1$,
\begin{eqnarray*}
s(T)&=&n_1(T)\left[\frac{2(n-1)}{n}-1\right]+n_2(T)\left[2-\frac{2(n-1)}{n}\right]+n_3(T)\left[3-\frac{2(n-1)}{n}\right]\\
&+&n_4(T)\left[4-\frac{2(n-1)}{n}\right]\\
&=&\frac{n-2}{n} n_1(T)+\frac{2}{n} n_2(T)+\frac{n+2}{n} n_3(T)+\frac{2n+2}{n} n_4(T).
\end{eqnarray*}
We now apply Lemma \ref{lemm0} to deduce that $s(T) = \frac{4(n-2)}{n}+\frac{n-2}{n}[2n_3(T)+4n_4(T)]$, proving the lemma.
\end{proof}

\begin{cor}
Let $T$ be a chemical tree with $n$ vertices. Then $s(T)\geq \frac{4(n-2)}{n}$, with equality if and only if  $T\cong P_n$.
\end{cor}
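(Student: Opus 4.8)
The plan is to read the corollary off Lemma \ref{lemm1} almost immediately. By that lemma,
$$s(T)=\frac{4(n-2)}{n}+\frac{n-2}{n}\bigl[2n_3(T)+4n_4(T)\bigr].$$
For any chemical tree on $n\geq 2$ vertices the factor $\frac{n-2}{n}$ is non-negative, and $2n_3(T)+4n_4(T)\geq 0$ since each $n_i(T)$ is a count of vertices. Hence the second summand is non-negative and $s(T)\geq \frac{4(n-2)}{n}$, which is the asserted inequality. (The case $n=1$ is vacuous, as the right-hand side is then negative.)

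For the characterization of equality, observe that $s(T)=\frac{4(n-2)}{n}$ forces $\frac{n-2}{n}\bigl[2n_3(T)+4n_4(T)\bigr]=0$. When $n>2$ this yields $n_3(T)=n_4(T)=0$, so every vertex of $T$ has degree $1$ or $2$; a connected graph all of whose vertices have degree at most $2$ is a path or a cycle, and since $T$ is acyclic it must be $P_n$. When $n=2$ we trivially have $T\cong P_2$. Conversely, $P_n$ satisfies $n_3(P_n)=n_4(P_n)=0$, and substituting into the formula of Lemma \ref{lemm1} gives $s(P_n)=\frac{4(n-2)}{n}$, so the bound is attained exactly at the path.

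I do not anticipate any real obstacle: the argument is a direct consequence of Lemma \ref{lemm1} together with the sign of the coefficient $\frac{n-2}{n}$. The only step worth stating carefully is the elementary graph-theoretic fact that a tree whose maximum degree is at most $2$ is precisely the path $P_n$, which settles the equality case.
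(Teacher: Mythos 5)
Your proof is correct and matches the paper's intent exactly: the paper states this corollary without proof as an immediate consequence of Lemma \ref{lemm1}, and your argument (non-negativity of $\frac{n-2}{n}[2n_3(T)+4n_4(T)]$, plus the fact that a tree with maximum degree at most $2$ is a path) is precisely the deduction being left to the reader. No issues.
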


\begin{lem}\label{lemm2}
Let $G$ be an $(n,1)$-chemical graph. Then $s(G) = 2n_3(G)+4n_4(G).$
\end{lem}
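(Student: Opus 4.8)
The plan is to mimic the computation in the proof of Lemma~\ref{lemm1}, but now with $|E(G)| = n$ because an $(n,1)$-chemical graph is connected with cyclomatic number $c = 1$, hence $m = n - 1 + 1 = n$. Consequently $\frac{2m}{n} = 2$ exactly, which is the crucial simplification: the average degree is an integer, so the contribution of the degree-$2$ vertices vanishes and the absolute values collapse cleanly.

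Concretely, first I would write
\[
s(G) = \sum_{i=1}^{4} n_i(G)\,\Bigl|\,i - \tfrac{2m}{n}\,\Bigr|
      = n_1(G)\,|1-2| + n_2(G)\,|2-2| + n_3(G)\,|3-2| + n_4(G)\,|4-2|,
\]
so that $s(G) = n_1(G) + n_3(G) + 2n_4(G)$. Then I would invoke Lemma~\ref{lemm0} with $c = 1$, which gives $n_1(G) = 2 - 2 + n_3(G) + 2n_4(G) = n_3(G) + 2n_4(G)$. Substituting this into the expression for $s(G)$ yields $s(G) = \bigl(n_3(G) + 2n_4(G)\bigr) + n_3(G) + 2n_4(G) = 2n_3(G) + 4n_4(G)$, which is exactly the claimed identity.

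There is essentially no obstacle here: the only thing to be careful about is correctly identifying $m = n$ for an $(n,1)$-chemical graph (so that $2m/n = 2$ and the $n_2$ term drops out), and then applying the already-established formula of Lemma~\ref{lemm0} verbatim with $c=1$. The maximum-degree-$4$ hypothesis is used only to guarantee that the sum over degrees runs from $1$ to $4$, and connectivity together with $c=1$ forces the edge count; everything else is a one-line substitution.
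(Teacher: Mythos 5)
Your proposal is correct and follows exactly the paper's own argument: compute $m=n$ so that $\frac{2m}{n}=2$, expand $s(G)=n_1(G)+n_3(G)+2n_4(G)$, and substitute $n_1(G)=n_3(G)+2n_4(G)$ from Lemma~\ref{lemm0} with $c=1$. Nothing further is needed.
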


\begin{proof}
Since $|E(G)|=n$,
\begin{eqnarray*}
s(G)&=&n_1(G)[2-1]+n_2(G)[2-2]+n_3(G)[3-2]+n_4(G)[4-2]\\
&=&n_1(G)+n_3(G)+2n_4(G),
\end{eqnarray*}
and by Lemma \ref{lemm0}, $s(G) = 2n_3(G)+4n_4(G)$, as desired.
\end{proof}

\begin{thm}\label{lemm3}
Let $G$ be an $(n,c)$-chemical graph such that $c\geq 2$ .
\begin{enumerate}
\item If $n>2c-2$, then \begin{eqnarray*}
s(G)&=&\frac{1}{n}\Big[(2n-4c+4)n_3(G)+(4n-4c+4)n_4(G)\Big].
\end{eqnarray*}
\item If $n\leq 2c-2$, then  $s(G)=\frac{4(n-c+1)}{n}n_4(G)$.
\end{enumerate}
\end{thm}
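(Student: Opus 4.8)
The plan is to evaluate $s(G)=\sum_{v\in V(G)}\big|deg_G(v)-\tfrac{2m}{n}\big|$ directly from the degree sequence, grouping vertices by degree, and then to remove the quantities $n_1(G)$ and $n_2(G)$ by means of Lemma~\ref{lemm0}. Two preliminary observations make every absolute value resolve without extra case work inside each case. Since $G$ is connected, $k=1$ and $m=n+c-1$, so $\tfrac{2m}{n}=2+\beta$ where $\beta:=\tfrac{2(c-1)}{n}$; note the identity $\beta n=2c-2$. Since $G$ is chemical, $2m=\sum_{v}deg_G(v)\le 4n$, hence $c\le n+1$ and therefore $0<\beta\le 2$, i.e. $2<\tfrac{2m}{n}\le 4$. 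Finally, $G$ is connected with at least one edge (as $c\ge 2$), so every vertex has degree in $\{1,2,3,4\}$ and $n_1+n_2+n_3+n_4=n$, where $n_i:=n_i(G)$.

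For part (1), the hypothesis $n>2c-2$ is equivalent to $\beta<1$, i.e. $2<\tfrac{2m}{n}<3$; thus every vertex of degree $1$ or $2$ satisfies $deg_G(v)<\tfrac{2m}{n}$ while every vertex of degree $3$ or $4$ satisfies $deg_G(v)>\tfrac{2m}{n}$. Writing out the sum,
$$
s(G)=n_1(1+\beta)+n_2\,\beta+n_3(1-\beta)+n_4(2-\beta)=(n_1+n_3+2n_4)+\beta\,(n_1+n_2-n_3-n_4).
$$
Now I would substitute $n_1+n_2-n_3-n_4=n-2n_3-2n_4$ and $n_1=2-2c+n_3+2n_4$ from Lemma~\ref{lemm0}, obtaining $s(G)=2-2c+2n_3+4n_4+\beta(n-2n_3-2n_4)$; the constant terms cancel because $\beta n=2c-2$, leaving $s(G)=2n_3+4n_4-2\beta(n_3+n_4)$, which is $\tfrac1n\big[(2n-4c+4)n_3+(4n-4c+4)n_4\big]$ upon inserting $\beta=\tfrac{2(c-1)}{n}$.

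Part (2) is the same argument with one change: $n\le 2c-2$ is equivalent to $\beta\ge 1$, i.e. $\tfrac{2m}{n}\ge 3$, so vertices of degree $3$ now contribute $\tfrac{2m}{n}-3=\beta-1\ge 0$ rather than $1-\beta$. Hence $s(G)=(n_1-n_3+2n_4)+\beta(n_1+n_2+n_3-n_4)$, and using $n_1+n_2+n_3-n_4=n-2n_4$, Lemma~\ref{lemm0}, and $\beta n=2c-2$ collapses this to $s(G)=(4-2\beta)n_4=\tfrac{4(n-c+1)}{n}n_4$. On the shared boundary $n=2c-2$ we have $\beta=1$ and both expressions reduce to $2n_4$, so it is immaterial which case absorbs it.

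The algebra is elementary; the only real content is the bookkeeping that fixes the range $2<\tfrac{2m}{n}\le 4$ and locates $\tfrac{2m}{n}$ relative to the single threshold $3$. This is exactly where the chemical restriction $\Delta(G)\le 4$ and the dichotomy $n>2c-2$ versus $n\le 2c-2$ are used, and it is what determines whether vertices of degree $3$ lie above or below $\tfrac{2m}{n}$ — the one point at which the two formulas differ.
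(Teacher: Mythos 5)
Your proposal is correct and follows essentially the same route as the paper: both proofs expand $s(G)$ over the degree classes $n_1,\dots,n_4$, use the chemical bound $2m\le 4n$ (hence $c\le n+1$) together with the dichotomy $n>2c-2$ versus $n\le 2c-2$ to resolve the absolute values around the threshold $3$, and then eliminate $n_1$ and $n_2$ via Lemma~\ref{lemm0}. Your $\beta$-parametrization and the explicit justification that $2<\tfrac{2m}{n}\le 4$ merely make the sign bookkeeping more transparent than in the paper's version; the substance is identical.
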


\begin{proof}
By definition, $|E(G)|=n+c-1$.
\begin{enumerate}
\item $n>2c-2$. Then,
\begin{eqnarray*}
s(G)&=&n_1(G) \left[\frac{2(n+c-1)}{n}-1\right]+n_2(G)\left[\frac{2(n+c-1)}{n}-2\right]\\ &+& n_3(G)\left[3-\frac{2(n+c-1)}{n}\right] + n_4(G)\left[4-\frac{2(n+c-1)}{n}\right]\\
&=&\frac{n+2c-2}{n}n_1(G)+\frac{2c-2}{n}n_2(G)+\frac{n-2c+2}{n}n_3(G)+\frac{2n-2c+2}{n}n_4(G),
\end{eqnarray*}
and by Lemma \ref{lemm0}
\begin{eqnarray*}
s(G)&=&\frac{1}{n}\Big[(2n-4c+4)n_3(G)+(4n-4c+4)n_4(G)\Big].
\end{eqnarray*}

\item $n\leq 2c-2$. By the well-known result of Euler,  $2|E(G)|$ $=$ $\sum_{v\in V(G)}deg_G(v)\leq\sum_{v\in V(G)}4$ $=$ $4n$. Therefore, $c=|E(G)|-n+1\leq n+1$. Thus $2c-2\leq 2n$ and

 \begin{eqnarray*}
s(G)&=&n_1(G)\left[\frac{2(n+c-1)}{n}-1\right]+n_2(G)\left[\frac{2(n+c-1)}{n}-2\right]\\ &+& n_3(G)\left[\frac{2(n+c-1)}{n}-3\right] + n_4(G)\left[4-\frac{2(n+c-1)}{n}\right]\\
&=&\frac{n+2c-2}{n}n_1(G)+\frac{2c-2}{n}n_2(G)+\frac{2c-2-n}{n}n_3(G)+\frac{2n-2c+2}{n}n_4(G),
\end{eqnarray*}
and by Lemma \ref{lemm0}, $s(G)=\frac{4(n-c+1)}{n}n_4(G)$.
\end{enumerate}
Hence the result.
\end{proof}

\vskip 0.4 true cm

\noindent{\textbf{Acknowledgments.}} The research of the  second author was partially supported by the University of Kashan under grant no 364988/109.


\end{document}